\newtheorem{definition}{Definition}[section]
\newtheorem{theorem}[definition]{Theorem}
\newtheorem{lemma}[definition]{Lemma}
\newtheorem{remark}[definition]{Remark}
\newtheorem{example}[definition]{\textit{Example}}
\newtheorem{proposition}[definition]{Proposition}
\numberwithin{equation}{section}
\newcommand{\Bo}{\mathcal{B}}
\newcommand{\C}{\mathbb{C}}
\newcommand{\R}{\mathbb{R}}
\newcommand{\ide}{\operatorname{I}}
\newcommand{\Semi}{$C_0$-semigroup }
\newcommand{\DA}{D(A)}
\newcommand{\DAz}{D(A_{2})}
\newcommand{\ran}{\operatorname{ran}}
\newcommand{\blockA}{\left(\begin{array}{c}A_{1}\\A_{2} \quad 0\end{array}\right)}
\title{Generators with a closure relation}
\author{F.L. Schwenninger\thanks{The first author has been supported by the Netherlands Organisation 
for Scientific Research (NWO), grant no.\ 613.001.004.  \newline 
f.l.schwenninger@utwente.nl        }
        \qquad H. Zwart 
\\\small Dept. of Applied Mathematics, \\\small University of Twente, P.O. Box 217, \\\small 7500 AE Enschede, The Netherlands \\
}
\begin{document}

\date{}
\maketitle
\subsection*{Abstract}
	Assume that a  block operator of the form $\left(\begin{smallmatrix}A_{1}\\A_{2}\quad 0\end{smallmatrix}\right)$, 
	acting on the Banach space $X_{1}\times X_{2}$, generates a contraction $C_{0}$-semigroup. 
	We show that the operator $A_{S}$ defined by $A_{S}x=A_{1}\left(\begin{smallmatrix}x\\SA_{2}x\end{smallmatrix}\right)$ 
	with the natural domain generates a contraction semigroup on $X_{1}$. 
	Here, $S$ is a boundedly invertible operator for which $\epsilon\ide-S^{-1}$ is dissipative for some $\epsilon>0$. 
	With this result the existence and uniqueness of solutions of the heat equation can be derived from the wave equation.
	\\ \textbf{Keywords}: {Block operator, Semigroup of operators, Semi-Inner-Product, Dissipative operator}
	\\ MSC Class.: {47D06 \and 47B44  \and 34G10}

\section{Introduction}\label{intro}
	The question whether an (unbounded) operator is the generator a \Semi appears naturally for abstract differential equations in the discussion of well-posedness. In this paper we relate the well-posedness of two abstract differential equations.\\
	Starting with an  abstract Cauchy problem (ACP) on the space $X_{1}\times X_{2}$,
		\begin{equation}\label{ACP-1}
		\begin{pmatrix}\dot{x}_{1}\\\dot{x}_{2}\end{pmatrix}=A_{ext}\begin{pmatrix}x_{1}\\x_{2}\end{pmatrix},\qquad x(0)=x_{0},\tag{ACP-1}
		\end{equation} 
	for an operator $A_{ext}$ of the form
		\begin{equation}
		A_{ext}=\blockA,\label{Aextform} \quad \begin{array}{l} A_{1}:D(A_{1})\subset X_{1}\times X_{2}\rightarrow X_{1}, \\
								    	   A_{2}:D(A_{2})\subset X_{1}\rightarrow X_{2},\end{array}
		\end{equation}
	we set $A_{S}x_{1}=A_{1}\left(\begin{smallmatrix}x_{1}\\SA_{2}x_{1}\end{smallmatrix}\right)$ 
	where $S$ is a bounded operator, and define the ACP
	\begin{equation}\label{ACP-2}
		\dot{x}=A_{S}x,\qquad x(0)=x_0\in X_{1}.\tag{ACP-2}
		\end{equation} 
	The question is whether (\ref{ACP-2}) is well-posed when (\ref{ACP-1}) is assumed to be well-posed.\\
The idea comes from port-based modeling, see e.g.\ \cite{jacobzwartbook,villegasthesis}. There, $A_{ext}$ defines a structure relating the variables $(f_{1},f_{2})^{T}$ and $(e_{1},e_{2})^{T}$, by $f=A_{ext}e$. 
	Now, adding the closure relation $e_{2}=Sf_{2}$, where $S$ maps from $X_{2}$ to $X_{2}$, yields the structure $A_{S}$, as depicted in Figure \ref{intstruct}. There, the operator $S$ is seen as adding dissipation. \newline
	 
	The form (\ref{Aextform}) appears in the context of port-Hamiltonian systems, see \cite{gorrecmaschkevillegaszwart,villegasthesis}, but is applicable in wider settings, see \cite{ZwartParahyp}. Motivated by this, we will study well-posedness in terms of operators generating contraction semigroups.
	Hence, we want to know whether the operator $A_{S}$ will generate a contraction \Semi 
	if this holds for the initial system of $A_{ext}$. 
	The case of $X_{1}$ and $X_{2}$ being Hilbert spaces has already been solved 
	and can be found in \cite{gorrecmaschkevillegaszwart,villegasthesis,ZwartParahyp}. 
	Our aim is to generalize the result, including the conditions on $S$, to arbitrary Banach spaces.\newline 
	A natural application is given by the heat equation for the space $L^{1}$. We conclude existence and uniqueness of its solutions from the undamped wave equation. Motivated by the example we give further results concerning the analyticity of the semigroup generated by $A_{S}$.
		\begin{figure}
		\begin{center}
		\includegraphics[scale=0.7]{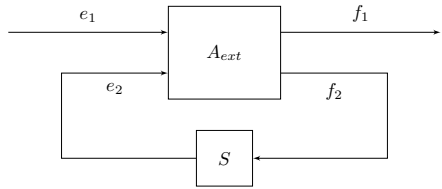}
		\caption{Interconnection structure}
		\end{center}
		\label{intstruct}
		\end{figure}
\subsection{Semi-Inner-Products}\label{lemmata}
		In this section we collect some facts we are going to need.\\
		The following notion was introduced by Lumer in 1961, see \cite{LumerSIP}. From now on, $X$ will be a Banach space.
		\begin{definition} For a Banach space $X$, a mapping $[\cdot,\cdot]:X\times X\to \C$ is called \textit{semi-inner-product, SIP}, if  for all $x,y,z\in X$ and $\lambda\in\C$

\begin{itemize}
				  \item $[x+\lambda z,y]=[x,y]+\lambda[z,y]$\qquad\textit{(linearity in first component)}, 
				  \item $[x,x]=\|x\|^{2}$\qquad \textit{(positive definiteness)},
				  \item $|[x,y]|^{2}\leq[x,x][y,y]$\qquad  \textit{(Cauchy-Schwarz inequality)}.	
				  \end{itemize}	
		\end{definition}
		
				\begin{lemma}\label{lemmaSIP} 
			The following assertions hold
			\begin{enumerate}[i.]
			\item \label{leSIP1} Every Banach space $X$ has a SIP, i.e. $X$ is a \textit{SIP space}.
			\item\label{leSIP2} For SIP spaces $(X,[\cdot,\cdot]_{X})$, $(Y,[\cdot,\cdot]_{Y})$, the mapping defined by
			\begin{equation}\label{prodSIP}
				\Big[\begin{pmatrix}x_{1}\\y_{1}\end{pmatrix},\begin{pmatrix}x_{2}\\y_{2}\end{pmatrix}\Big]_{X\times Y}:=[x_{1},x_{2}]_{X}+[y_{1},y_{2}]_{Y}
			\end{equation} is a SIP for $X\times Y$ equipped with the Euclidean norm
			\begin{equation}\label{twonorm}
			\left\|\begin{pmatrix}x\\y\end{pmatrix}\right\|_{X\times Y}=\sqrt{\|x\|^{2}+\|y\|^{2}},\qquad x\in X,y\in Y.
			\end{equation} 
			\end{enumerate}
		\end{lemma}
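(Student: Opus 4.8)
The plan is to treat the two assertions separately: part~\ref{leSIP1} rests on the Hahn--Banach theorem, while part~\ref{leSIP2} is a direct verification of the three defining properties of a SIP.

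For part~\ref{leSIP1}, the idea is to manufacture a SIP from a selection of support functionals. For each nonzero $x\in X$, the Hahn--Banach theorem yields a functional $\phi_{x}\in X^{*}$ with $\|\phi_{x}\|=1$ and $\phi_{x}(x)=\|x\|$; for $x=0$ set $\phi_{0}=0$. Then I would define $[y,x]:=\|x\|\,\phi_{x}(y)$. Linearity in the first component is immediate since each $\phi_{x}$ is linear; positive definiteness follows from $[x,x]=\|x\|\,\phi_{x}(x)=\|x\|^{2}$; and the Cauchy--Schwarz inequality follows from $|[y,x]|=\|x\|\,|\phi_{x}(y)|\le\|x\|\,\|y\|=\sqrt{[x,x]}\sqrt{[y,y]}$, using $\|\phi_{x}\|\le 1$ (and noting both sides vanish when $x=0$).

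For part~\ref{leSIP2}, I would simply check the three axioms for the map in \eqref{prodSIP}. Linearity in the first argument is inherited summand-by-summand from $[\cdot,\cdot]_{X}$ and $[\cdot,\cdot]_{Y}$. Positive definiteness is precisely the definition \eqref{twonorm} of the Euclidean norm, since $[(x,y),(x,y)]_{X\times Y}=\|x\|^{2}+\|y\|^{2}$. For Cauchy--Schwarz, the triangle inequality in $\C$ together with the Cauchy--Schwarz inequalities in $X$ and in $Y$ give $|[(x_{1},y_{1}),(x_{2},y_{2})]_{X\times Y}|\le\|x_{1}\|\,\|x_{2}\|+\|y_{1}\|\,\|y_{2}\|$, and the ordinary Cauchy--Schwarz inequality in $\R^{2}$ bounds this by $\sqrt{\|x_{1}\|^{2}+\|y_{1}\|^{2}}\,\sqrt{\|x_{2}\|^{2}+\|y_{2}\|^{2}}=\|(x_{1},y_{1})\|_{X\times Y}\,\|(x_{2},y_{2})\|_{X\times Y}$, as required.

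The only genuinely nontrivial ingredient is the existence of the support functionals $\phi_{x}$ in part~\ref{leSIP1}, i.e.\ the appeal to Hahn--Banach; everything else is bookkeeping. One subtlety worth flagging is that such a SIP is generally not unique (the choice of $\phi_{x}$ is canonical only when $X$ is smooth), but the lemma asserts only existence, so any fixed consistent selection suffices.
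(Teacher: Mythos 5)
Your proof is correct and follows the same route as the paper: part i.\ via Hahn--Banach norming functionals (the construction in Lumer's original paper, which the authors simply cite), and part ii.\ by direct verification of the three axioms, with the Cauchy--Schwarz step for the product space reduced to the scalar Cauchy--Schwarz inequality in $\R^{2}$ exactly as needed. You have merely filled in details the paper leaves to the reader.
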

			\begin{proof}
			 \ref{leSIP1}.\  relies on the Hahn-Banach theorem, see \cite{LumerSIP}. 
			For \ref{leSIP2}.\ one simply checks the definition of a SIP.
			\end{proof}
		As an example, let us consider $L^{p}$ spaces, see \cite[page 90]{EngelNagel}.
	 	\begin{example}\label{SIPLp}
			 For the space $L^{p}[0,1]$, $p\geq1$, 
			 \begin{equation*}
			 [f,g]= \int_{0}^{1}f(s)\tilde{g}(s) \ ds,\qquad f,g\in L^{p}[0,1],
			 \end{equation*}
			  where 
			\begin{equation*}
			  \tilde{g}(s):=\left\{ \begin{array}{cl} \overline{g(s)}|g(s)|^{p-2}\|g\|_{L^{p}}^{2-p}, &g(s)\neq0\\
                                0  &\text{otherwise} \end{array}\right. ,
			 \end{equation*}
			  defines a SIP.
		\end{example}
		\begin{definition} Let $X$ be a Banach space. An operator $A:\DA\subset X\rightarrow X$ is called \textit{dissipative}, if there exists a SIP, such that
			\begin{equation}\label{defDiss}
			\Re[Ax,x]\leq0 \qquad \forall x\in \DA.
			\end{equation}
		\end{definition}
		
In the literature the notion of \textit{dissipativity} for general Banach spaces is often introduced in a different way (see e.g.\ \cite{EngelNagel}). 
				We remark that this definition is equivalent. 
				For instance, (\ref{defDiss}) implies that for all $\lambda>0$, $x\in X$,
				\begin{equation*}
					\lambda\|x\|^{2}=\lambda\Re[x,x]=\Re[(\lambda\ide -A)x,x]+\Re[Ax,x]\leq\|(\lambda\ide-A)x\| \cdot \|x\|,
				\end{equation*}
				where we used (\ref{defDiss}) and Cauchy-Schwarz in the last inequality. 
				The converse employs the Banach-Alaoglu Theorem and can be found in Proposition II.3.23 in \cite{EngelNagel}. There,  (\ref{defDiss}) is formulated as 
				\begin{align*}
				\forall x\in\DA\  \exists j(x)\in \mathcal{J}(x)&:= \left\{x'\in X':\langle x,x'\rangle=\|x'\|^{2}=\|x\|^{2}\right\} \text{ such that } \\& \Re\langle Ax,j(x)\rangle\leq 0,
				\end{align*}
				(where $X'$ denotes the dual of $X$, $\langle \cdot,\cdot\rangle$ the duality brackets). $\mathcal{J}(x)$ is called the \textit{duality set of $x$}. Note that any \textit{selection} $j:X\rightarrow X':x\mapsto j(x)\in\mathcal{J}(x)$ defines a SIP $[\cdot,\cdot]=\langle \cdot,j(\cdot)\rangle$ and, vice versa, every SIP $[\cdot,\cdot]$ yields a selection $j(x)=[\cdot,x]\in\mathcal{J}(x)$ for all $x\in X$. \\
				
			The following theorem is a standard result in semigroup theory and can be found in \cite[Section II.3.b]{EngelNagel} or \cite[Theorem 3.1]{LumPhiDiss} (in the latter dissipativity is defined via SIPs). 

		\begin{theorem}[Lumer-Phillips]\label{LumerPhillips} 
			For the linear operator $A$ on the Banach space $X$ the following assertions are equivalent
				\begin{enumerate}[i.]
					\item \label{LumPhil1} $A$ generates a contraction $C_{0}$-semigroup,
					\item \label{LumPhil2} $A$ is densely defined, dissipative and there exists some $\lambda>0$ such that 
						\begin{equation}\label{rancond}
							\ran(\lambda\ide-A)=X.
						\end{equation}
				\end{enumerate}
			In this case $A$ is dissipative w.r.t.\ any SIP on $X$, and (\ref{rancond}) holds for every $\lambda>0$.
			If $X$ is reflexive, $D(A)$ is automatically dense from the other assumptions in \ref{LumPhil2}. 
		\end{theorem}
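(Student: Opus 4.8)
I will prove the equivalence \ref{LumPhil1} $\Leftrightarrow$ \ref{LumPhil2} in two steps and then read off the supplementary statements. \textbf{\ref{LumPhil1} $\Rightarrow$ \ref{LumPhil2}.} Let $A$ generate a contraction \Semi $(T(t))_{t\ge0}$. Density of $\DA$ is part of being a generator (alternatively, $t^{-1}\!\int_0^tT(s)x\,ds\in\DA$ and $t^{-1}\!\int_0^tT(s)x\,ds\to x$). Fix any SIP $[\cdot,\cdot]$; since $u\mapsto[u,x]$ is a bounded linear functional by Cauchy--Schwarz, limits pass through it. For $x\in\DA$, Cauchy--Schwarz and $\|T(t)x\|\le\|x\|$ give $\Re[T(t)x,x]\le|[T(t)x,x]|\le\|x\|^2$, so $\frac1t\big(\Re[T(t)x,x]-\|x\|^2\big)\le0$; letting $t\downarrow0$ gives $\Re[Ax,x]=\lim_{t\downarrow0}\Re\big[\tfrac{T(t)x-x}{t},x\big]\le0$, i.e.\ $A$ is dissipative w.r.t.\ \emph{every} SIP. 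Finally every generator of a contraction semigroup has $(0,\infty)\subset\rho(A)$ with $\|\lambda(\lambda\ide-A)^{-1}\|\le1$ (Hille--Yosida), so $\ran(\lambda\ide-A)=X$ for all $\lambda>0$; this yields \ref{LumPhil2} and the claim that (\ref{rancond}) holds for every $\lambda>0$.

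\textbf{\ref{LumPhil2} $\Rightarrow$ \ref{LumPhil1}.} Fix a SIP witnessing dissipativity. As already observed in the excerpt, (\ref{defDiss}) gives $\|(\lambda\ide-A)x\|\ge\lambda\|x\|$ for all $\lambda>0$, $x\in\DA$; hence $\lambda\ide-A$ is injective with inverse bounded by $1/\lambda$ on its range. By hypothesis $\ran(\lambda_0\ide-A)=X$ for some $\lambda_0>0$, so $(\lambda_0\ide-A)^{-1}$ is a bounded operator defined on all of $X$; being everywhere defined and bounded it is closed, hence so are $\lambda_0\ide-A$ and $A$, and $\lambda_0\in\rho(A)$. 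A connectedness argument then extends this to the whole half-line: whenever $\lambda\in(0,\infty)\cap\rho(A)$, the bound $\|(\lambda\ide-A)^{-1}\|\le1/\lambda$ forces the disc $\{\,|\mu-\lambda|<\lambda\,\}$ into $\rho(A)$ via a Neumann series, so starting at $\lambda_0$ and doubling gives $(0,\infty)\subset\rho(A)$ with $\|\lambda(\lambda\ide-A)^{-1}\|\le1$ throughout. As $A$ is moreover closed and densely defined, the Hille--Yosida theorem shows $A$ generates a contraction semigroup; concretely it is the strong limit as $\lambda\to\infty$ of $e^{tA_\lambda}$ for the Yosida approximants $A_\lambda=\lambda^2(\lambda\ide-A)^{-1}-\lambda\ide$, noting $\|e^{tA_\lambda}\|\le e^{-t\lambda}e^{t\lambda^2\|(\lambda\ide-A)^{-1}\|}\le1$.

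\textbf{Supplementary statements.} The claims that $A$ is then dissipative w.r.t.\ any SIP and that (\ref{rancond}) holds for all $\lambda>0$ were obtained in the first step, once $A$ is known to generate. For the reflexive case, drop density and keep only dissipativity and $\ran(\lambda_0\ide-A)=X$; the argument above used no density, so $(0,\infty)\subset\rho(A)$, $\|\lambda(\lambda\ide-A)^{-1}\|\le1$, and $A$ closed still hold. Fix $y\in X$ and set $y_n:=n(n\ide-A)^{-1}y$, so $\|y_n\|\le\|y\|$; by reflexivity some subsequence $y_{n_k}$ converges weakly to a limit $z\in\overline{\DA}$ (each $y_n\in\DA$ and norm-closed subspaces are weakly closed). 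Since $(n\ide-A)^{-1}y\to0$ in norm while $A(n\ide-A)^{-1}y=n(n\ide-A)^{-1}y-y=y_n-y\rightharpoonup z-y$, and the graph of the closed operator $A$ is weakly closed, we get $z-y=A0=0$. Thus every weak subsequential limit of the bounded sequence $(y_n)$ equals $y$, so $y_n\rightharpoonup y$ and $y\in\overline{\DA}$; as $y$ was arbitrary, $\DA$ is dense.

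\textbf{Main obstacle.} The one non-formal step is passing from the resolvent estimates to the semigroup in \ref{LumPhil2} $\Rightarrow$ \ref{LumPhil1}: either invoke Hille--Yosida as a black box, or run the Yosida approximation and prove strong convergence of $e^{tA_\lambda}$ to a $C_{0}$-semigroup. The remaining ingredients --- the dissipativity lower bound, closedness, the doubling argument, and the reflexive weak-compactness argument --- are routine once that tool is available.
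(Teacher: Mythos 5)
Your proof is correct and is exactly the classical argument: the paper itself gives no proof of this theorem but cites it as standard (Engel--Nagel, Section II.3.b, and Lumer--Phillips), and your route --- dissipativity w.r.t.\ every SIP via $\Re[T(t)x,x]\le\|x\|^2$, the lower bound $\|(\lambda\ide-A)x\|\ge\lambda\|x\|$, the Neumann-series extension of the resolvent set, Hille--Yosida, and the weak-compactness argument for density in the reflexive case --- is precisely the one found in those references.
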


\section{Main result}\label{mainsection}
	\begin{theorem}\label{mainresult}
    		   Let $A_{1}:D(A_{1})\subset X_{1}\times X_{2}\rightarrow X_{1}$ and $A_{2}:\DAz\subset X_{1}\rightarrow X_{2}$  be operators such that 
	            \begin{align*}
	            	 & A_{ext}:=\blockA,\\
	           	 &D(A_{ext})=\left\{(x_{1},x_{2})\in X_{1}\times X_{2}:x_{1}\in D(A_{2})\wedge(x_{1},x_{2})\in D(A_{1})\right\}
	            \end{align*}
	          generates a contraction $C_{0}$-semigroup on $X_{1}\times X_{2}$ equipped with the Euclidean norm, see (\ref{twonorm}).
	          Let $S\in\Bo(X_{2})$ be a boundedly invertible satisfying 
	            \begin{equation}\label{assSinvers}
	            \Re[x,Sx]_{2} \geq m_{2}\|x\|_{2}^2 \qquad \forall x\in X_{2},
	            \end{equation}
	           for some $m_{2}>0$ and some SIP $[\cdot,.\cdot]_{2}$ on $X_{2}$. Then
	            \begin{equation*}
	            A_{S}x=A_{1}\begin{pmatrix}x\\SA_{2}x\end{pmatrix},
	            \end{equation*}
	           defined on $D(A_S)=\left\{x\in X_{1}:(x,SA_{2}x)\in D(A_{ext})\right\}$         
	           generates a contraction semigroup on $X_1$ provided that $D(A_{S})$ is dense or that $X_{1}$ is reflexive.
	 \end{theorem}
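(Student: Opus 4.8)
My plan is to verify the hypotheses of the Lumer--Phillips Theorem (Theorem \ref{LumerPhillips}) for $A_S$ on $X_1$: dissipativity with respect to some SIP on $X_1$, density of $D(A_S)$, and surjectivity of $\lambda\ide-A_S$ for one $\lambda>0$. Density is either part of the hypothesis or, when $X_1$ is reflexive, supplied automatically by the last sentence of Theorem \ref{LumerPhillips} once the other two properties hold, so only the remaining two need work. Throughout I fix the SIP $[\cdot,\cdot]_2$ from \eqref{assSinvers}, pick any SIP $[\cdot,\cdot]_1$ on $X_1$, and let $[\cdot,\cdot]$ be the product SIP on $X_1\times X_2$ of Lemma \ref{lemmaSIP}; since $A_{ext}$ generates a contraction semigroup, Theorem \ref{LumerPhillips} makes $A_{ext}$ dissipative with respect to $[\cdot,\cdot]$ and gives $\ran(\lambda\ide-A_{ext})=X_1\times X_2$ for every $\lambda>0$, with bounded inverse $R_\lambda:=(\lambda\ide-A_{ext})^{-1}$. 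Dissipativity of $A_S$ is then the easy part: for $x\in D(A_S)$ one has $\left(\begin{smallmatrix}x\\SA_2x\end{smallmatrix}\right)\in D(A_{ext})$ and $A_{ext}\left(\begin{smallmatrix}x\\SA_2x\end{smallmatrix}\right)=\left(\begin{smallmatrix}A_Sx\\A_2x\end{smallmatrix}\right)$, so feeding this element into the dissipativity inequality for $A_{ext}$ and unfolding \eqref{prodSIP} yields $\Re[A_Sx,x]_1+\Re[A_2x,SA_2x]_2\le 0$; by \eqref{assSinvers} applied to $A_2x$ the second summand is $\ge m_2\|A_2x\|_2^2\ge 0$, hence $\Re[A_Sx,x]_1\le 0$.

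For the range condition, I would fix $\lambda>0$ (to be chosen) and $y\in X_1$, write $R_{ij}$ for the compressions of $R_\lambda$, and for $v\in X_2$ set $x:=R_{11}y+R_{12}v$ and $x_2:=R_{21}y+R_{22}v$, so that $\left(\begin{smallmatrix}x\\x_2\end{smallmatrix}\right)=R_\lambda\left(\begin{smallmatrix}y\\v\end{smallmatrix}\right)\in D(A_{ext})$ and, automatically, $\lambda x-A_1\left(\begin{smallmatrix}x\\x_2\end{smallmatrix}\right)=y$ and $A_2x=\lambda x_2-v$. Then the requirement ``$x\in D(A_S)$ and $(\lambda\ide-A_S)x=y$'' is equivalent to $x_2=SA_2x=S(\lambda x_2-v)$, i.e.\ to the equation in $X_2$
\[
  M_\lambda v=(\lambda S-\ide)R_{21}y,\qquad M_\lambda:=S+(\ide-\lambda S)R_{22}.
\]
So it suffices to show that $M_\lambda$ is boundedly invertible for one admissible $\lambda$; then $x=R_{11}y+R_{12}v$ with $v=M_\lambda^{-1}(\lambda S-\ide)R_{21}y$ lies in $D(A_S)$ and solves $(\lambda\ide-A_S)x=y$, and since $y$ is arbitrary this gives $\ran(\lambda\ide-A_S)=X_1$.

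To invert $M_\lambda$ I would use two estimates with respect to $[\cdot,\cdot]_2$. First, substituting $x=S^{-1}u$ in \eqref{assSinvers} (legitimate because $S$ is boundedly invertible) gives $\Re[S^{-1}u,u]_2\ge m_2\|S^{-1}u\|_2^2\ge\epsilon\|u\|_2^2$ with $\epsilon:=m_2\|S\|^{-2}$, so for $0<\lambda<\epsilon$ the bounded operator $D:=\lambda\ide-S^{-1}$ is strictly dissipative with respect to $[\cdot,\cdot]_2$, hence boundedly invertible; moreover $M_\lambda=S(\ide-DR_{22})$, so by invertibility of $S$ it is enough to invert $\ide-DR_{22}$, equivalently $D^{-1}-R_{22}$. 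Second, feeding $\left(\begin{smallmatrix}0\\v\end{smallmatrix}\right)$ into $R_\lambda$ and using dissipativity of $A_{ext}$ with respect to the product SIP yields $\Re[v,R_{22}v]_2\ge\lambda\|R_{22}v\|_2^2\ge 0$ for all $v\in X_2$. The task is to combine these two facts into invertibility of $D^{-1}-R_{22}$.

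That last step is where I expect the real difficulty to lie. In a Hilbert space it would be routine: $D$ strictly dissipative forces $D^{-1}$ strictly dissipative, $\Re\langle v,R_{22}v\rangle\ge 0$ makes $R_{22}$ accretive, hence $D^{-1}-R_{22}$ is strictly dissipative and therefore invertible. In a general Banach space the semi-inner-product is not symmetric, so the estimates above control the brackets with $S$, respectively $R_{22}$, on the \emph{right}, whereas after passing to $D^{-1}$ one needs a bound on $\Re[D^{-1}v,v]_2$ with the operator on the \emph{left}. I would reconcile this by moving between the bracket inequalities and the norm inequality $\lambda\|x\|\le\|(\lambda\ide-A)x\|$ via the characterisation of dissipativity recalled after \eqref{defDiss} (which lets one deduce dissipativity, and then $m$-dissipativity since the operators involved are bounded), or by passing to an equivalent norm on $X_2$ adapted to $S$; once $D^{-1}-R_{22}$ is known to be invertible, the remaining bookkeeping is routine and Theorem \ref{LumerPhillips} finishes the proof.
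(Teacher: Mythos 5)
Your dissipativity argument is exactly the paper's: insert $\bigl(\begin{smallmatrix}x\\SA_2x\end{smallmatrix}\bigr)$ into the dissipativity inequality for $A_{ext}$ with respect to the product SIP and peel off the term $\Re[A_2x,SA_2x]_2\ge m_2\|A_2x\|_2^2\ge 0$ using (\ref{assSinvers}). That part is fine.

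The range condition, however, is left with a genuine gap, and it is precisely the gap you flag yourself. Your reduction to the solvability of $M_\lambda v=(\lambda S-\ide)R_{21}y$ with $M_\lambda=S(\ide-DR_{22})=SD(D^{-1}-R_{22})$, $D=\lambda\ide-S^{-1}$, is algebraically correct, but the two estimates you have --- $\Re[Du,u]_2\le-(\epsilon-\lambda)\|u\|_2^2$ and $\Re[v,R_{22}v]_2\ge 0$ --- do not combine to give invertibility of $D^{-1}-R_{22}$ in a general Banach space. Passing from strict dissipativity of $D$ to a bound on $\Re[D^{-1}v,v]_2$ is exactly the implication ``$B$ dissipative and invertible $\Rightarrow$ $B^{-1}$ dissipative'' that the paper's own Remark (item 3) points out is \emph{false} in Banach spaces, even in finite dimensions; and the inequality for $R_{22}$ has the operator in the wrong slot of the (non-symmetric) SIP. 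Neither of your proposed repairs (shuttling through the norm characterisation of dissipativity, or renorming $X_2$) is carried out, and it is not clear either can be, so the proof as written does not close.

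The paper avoids this entirely by never inverting anything built from $S^{-1}$ and the resolvent separately. It perturbs the \emph{generator}: with $P=\bigl(\begin{smallmatrix}0&0\\0&\lambda\ide-S^{-1}\end{smallmatrix}\bigr)\in\Bo(X_1\times X_2)$ and $\lambda\in(0,m_2/\|S\|^2]$, the operator $P$ is dissipative for the product SIP (by the same substitution $x=S^{-1}u$ in (\ref{assSinvers}) that you use), so $A_{ext}+P$ is a dissipative bounded perturbation of a generator and hence generates a contraction semigroup by Lumer--Phillips; in particular $\lambda\ide-A_{ext}-P$ is surjective. Solving $(\lambda\ide-A_{ext}-P)\bigl(\begin{smallmatrix}x_1\\x_2\end{smallmatrix}\bigr)=\bigl(\begin{smallmatrix}g\\0\end{smallmatrix}\bigr)$, the second row reads $\lambda x_2-A_2x_1+S^{-1}x_2-\lambda x_2=0$, which forces $x_2=SA_2x_1$, and the first row then becomes $(\lambda\ide-A_S)x_1=g$. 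This one-line trick delivers exactly the surjectivity your $M_\lambda$ was supposed to encode (indeed it implicitly inverts $\ide-R_{22}D$, which is equivalent to inverting your $\ide-DR_{22}$), but it gets it from the Lumer--Phillips theorem applied to $A_{ext}+P$ rather than from an operator-theoretic argument on $X_2$ alone. You should replace your final paragraph by this perturbation argument.
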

			\begin{proof}
				By the Lumer-Phillips Theorem, the proof consists of two steps. 
				First we show that $A_{S}$ is dissipative. 
				Let $[\cdot,\cdot]_{1}$ be a SIP on $X_{1}$. Then, let $[\cdot,\cdot]_{X_{1}\times X_{2}}$ be the SIP 
				defined in (\ref{prodSIP}) with respect to $[\cdot,\cdot]_{1}$ and $[\cdot,\cdot]_{2}$.
				 For $x\in D(A_{S})$ we get
			\begin{align}
				 [A_{S}x,x]_{1}={}&\begin{bmatrix}A_{1}\begin{pmatrix}x\\SA_{2}x\end{pmatrix},x\end{bmatrix}_{1}\notag\\
				                         			   ={}&\begin{bmatrix}A_{1}\begin{pmatrix}x\\SA_{2}x\end{pmatrix},x\end{bmatrix}_{1}+
				                         			     \begin{bmatrix}A_{2}x,SA_{2}x\end{bmatrix}_{2}-\begin{bmatrix}A_{2}x,SA_{2}x\end{bmatrix}_{2}\notag\\
				                         			   ={}&\begin{bmatrix}A_{ext}\begin{pmatrix}x\\SA_{2}x\end{pmatrix},\begin{pmatrix}x\\SA_{2}x\end{pmatrix}\end{bmatrix}_{X_{1}\times X_{2}}
				                         			       -\begin{bmatrix}A_{2}x,SA_{2}x\end{bmatrix}_{2}\label{Aextdiss}
			\end{align}
			The second term is less or equal zero by the assumption (\ref{assSinvers}).
			By Theorem \ref{LumerPhillips}, $A_{ext}$ is dissipative w.r.t. any SIP on $X_{1}\times X_{2}$. 
			Together this yields
				\begin{equation*}
					\Re[A_{S}x,x]_{1}\leq0.
				\end{equation*}
			Hence, $A_{S}$ is dissipative. \\ 
			
			To show the range condition (\ref{rancond}), let $\lambda\in\R$ and consider
				 \begin{equation*}
                  			  P=\begin{pmatrix}0  &0\\\\0 & \lambda\ide-S^{-1}\end{pmatrix}\in\Bo(X_{1}\times X_{2}).
                   		\end{equation*}
			$A_{ext}+P$ is a bounded perturbation of a generator, hence, it also generates a semigroup, 
			see \cite[Theorem III.1.3]{EngelNagel}.
			By (\ref{assSinvers}) we have for $x=(x_{1},x_{2})^{T}\in X_{1}\times X_{2}$ that
				\begin{equation*}
					\Re[Px,x]_{X_{1}\times X_{2}}=\Re[(\lambda\ide-S^{-1})x_{2},x_{2}]_{2}\leq\left(\lambda-\frac{m_{2}}{\|S\|^{2}}\right)\|x_{2}\|^{2}.
				\end{equation*}
			Thus, $P$ is dissipative if $\lambda\in (0,m_{2}/\|S\|^{2}]$, and then, 
			$A_{ext}+P$ generates a contraction semigroup by the Lumer-Phillips Theorem.
			Particularly, the range of $\lambda\ide-A_{ext}-P$ equals $X_{1}\times X_{2}$. 
			Hence, for any pair $(g,0)\in X_{1}\times X_{2}$ there exists $(x_{1},x_{2})\in X_{1}\times X_{2}$ such that
				\begin{equation}\label{proofmainthmRan}
					(\lambda\ide-A_{ext}-P)\begin{pmatrix}x_{1}\\x_{2}\end{pmatrix}=\begin{pmatrix}g\\0\end{pmatrix}.
				\end{equation}
			By the structure of $A_{ext}$, the second component reads
				\begin{equation*}
					\lambda x_{2}-A_{2}x_{1}+S^{-1}x_{2}-\lambda x_{2}=0,
				\end{equation*}
			which implies $x_{2}=SA_{2}x_{1}$. Inserting in the first component of (\ref{proofmainthmRan}) gives
				\begin{equation*}
				\lambda x_{1}-A_{1}\begin{pmatrix}x_{1}\\SA_{2}x_{1}\end{pmatrix}=g,
				\end{equation*}
			which is $(\lambda\ide-A_{S})x_{1}=g$. Thus, $\ran (\lambda\ide-A_{S})=X_{1}$.\\
			By assumption that either $D(A_{S})$ is dense or $X_{1}$ is reflexive we conclude from Theorem \ref{LumerPhillips} (Lumer-Phillips) that $A_{S}$ generates a contraction semigroup.
			\end{proof}
			\begin{remark}
			\begin{enumerate}
			\item Because of the boundedness of $S^{-1}$, condition (\ref{assSinvers}) holds for all SIPs on $X_{2}$ if it holds for some SIP, see \cite[Remark 2]{LumPhiDiss}. 
			\item Note that since $S$ is boundedly invertible, (\ref{assSinvers}) is equivalent to
			\begin{equation*}
			\exists \tilde{m}>0\ \forall x\in X_{2}:\qquad \Re[S^{-1}x,x]_{2}\leq \tilde{m} \|x\|^{2} \Leftrightarrow \Re[(\tilde{m}\ide-S^{-1}\ide)x,x]_{2}\leq0,
			\end{equation*}
			which means that $\tilde{m}\ide-S^{-1}$ is dissipative.
			\item For a boundedly invertible operator $B\in\Bo(X)$ on a Banach space $X$, $B$ dissipative does not necessarily imply that $B^{-1}$ is dissipative. In fact, by Lumer-Phillips this is equivalent to ask whether $B^{-1}$ generates a contraction $C_{0}$-semigroup, if $B$ does. The answer is negative in general, even in finite dimensions, see e.g. \cite[Section 2]{GomilkoTomilovZwart}. However, on Hilbert spaces, the dissipativity of $B^{-1}$ always  follows from the one of $B$ by the symmetry of the inner product.
			\item For $X_{2}$ being a Hilbert space the assumptions on $S$ are equivalent to
			\begin{equation*}
			S\in\Bo(X_{2}) \text{ and } S+S^{*}\geq\epsilon\ide>0.
			\end{equation*}	
			\end{enumerate}		
			\end{remark} 
			We finish this part by showing that the  converse of Theorem \ref{mainresult} does not hold in the sense that $A_{ext}$ does not necessarily generate a contraction $C_{0}$-semigroup if $A_{S}$ does.
			Looking at the proof, there is no reason to believe that the arguments in both parts (disspativity, range condition) could be reversed. For instance, let $S=\ide$ and $A_{S}$ be dissipative. Then, one gets that 
\begin{equation*}\Re\begin{bmatrix}A_{ext}\begin{pmatrix}x\\SA_{2}x\end{pmatrix},\begin{pmatrix}x\\SA_{2}x\end{pmatrix}\end{bmatrix}_{X_{1}\times X_{2}}\leq \|x\|_{X_{1}}^{2}\qquad \forall x\in D(A_{S})
\end{equation*}
by reading the eq.\ (\ref{Aextdiss}) in reversed order. However, this won't give that $A_{ext}$ is dissipative (and since $A_{ext}$ should generate a semigroup, this should hold w.r.t.\ any SIP) in general. In fact, consider the matrix case
\begin{equation*}
A_{ext}=\begin{pmatrix}0&0\\1&0\end{pmatrix}\in\R^{2\times2} \quad \Rightarrow \quad A_{S}=A_{\ide}=0,
\end{equation*}
with the Euclidean norm on $\R^{2}$. Clearly,
\begin{equation*}
\left[A_{ext}\begin{pmatrix}x_{1}\\x_{2}\end{pmatrix},\begin{pmatrix}x_{1}\\x_{2}\end{pmatrix}\right]=\left[\begin{pmatrix}0\\x_{1}\end{pmatrix},\begin{pmatrix}x_{1}\\x_{2}\end{pmatrix}\right]=x_{1}x_{2}.
\end{equation*}
Therefore, $A_{ext}$ can not be dissipative, whereas $[A_{S}x,x]=0$.

%			Knowing that, under the above assumptions, the contractivity carries over, one might ask whether analyticity is preserved. This is not true in general, however, let us state the following positive answer.			
%\begin{theorem}
%	Additionally to the assumptions in Theorem \ref{mainresult} assume that $A_{ext}$ even generates a group of contractions (hence, a group of surjective isometries), and there exist $\alpha\in(0,\frac{\pi}{2})$, $m_{2}>0$ such that 
%	\begin{equation}\label{eq:Supdate}
%	 \Re e^{i \alpha }[x,Sx] \geq m_{2} \|x\|^{2},\quad \Re e^{-i \alpha }[x,Sx] \geq m_{2} \|x\|^{2} \quad \forall x\in X_{2}.
%	\end{equation}
%	Then, $A_{S}$ is sectorial.
%\end{theorem}
%\begin{proof}
%By \cite{EngelNagel} it suffices to show that $e^{\pm i\alpha}A_{S}$ generates a contraction semigroup.
%The proof follows the lines of the one for Theorem \ref{mainresult},
%\begin{equation*}
% e^{i\alpha}[A_{S}x,x]_{1}= e^{i\alpha}\begin{bmatrix}A_{ext}\begin{pmatrix}x\\SA_{2}x\end{pmatrix},\begin{pmatrix}x\\SA_{2}x\end{pmatrix}\end{bmatrix}_{X_{1}\times X_{2}}
%				                         			       -e^{i\alpha}\begin{bmatrix}A_{2}x,SA_{2}x\end{bmatrix}_{S}
%			\end{equation*}
%			Since $A_{ext}$ generates a group of isometries it follows by Lumer-Phillips that the real part of the first term on the right-hand-side equals zero.
%
%\end{proof}
%
\subsection{From Wave to Heat equation}\label{WaveandHeat}

			We start with the undamped wave equation $\frac{\partial^{2}w}{\partial t^{2}}(\xi,t)=\frac{\partial^{2}w}{\partial \xi^{2}}(\xi,t)$ on $[0,1]$.
			The boundary conditions are chosen to be
			\begin{equation}\label{wave-BC}\left\{
			\begin{array}{l}
				(K_{1}-1)\frac{\partial w}{\partial t}(1,t)=(K_{1}+1)\frac{\partial w}{\partial \xi}(1,t),\\
				(1-K_{2})\frac{\partial w}{\partial t}(0,t)=(K_{2}+1)\frac{\partial w}{\partial \xi}(0,t),  
			\end{array}\right. \forall t\geq0, \text{with }|K_{1}|,|K_{2}|\leq 1.
 			\end{equation}
			This can be written as the following ACP on $L^{p}[0,1]\times L^{p}[0,1]$, $p\geq1$,
			\begin{equation}\label{ACPwave}
                       		\begin{pmatrix}\dot{x}_{1}\\\dot{x}_{2}\end{pmatrix}=\begin{pmatrix}0&\frac{\partial}{\partial \xi}\\\frac{\partial}{\partial \xi}&0\end{pmatrix}
			\begin{pmatrix}x_{1}\\x_{2}\end{pmatrix}:=A_{ext}x,	\qquad x(0)=x_{0}
			\end{equation}
	 		with
			\begin{align}\label{Aextwave}
           D(A_{ext})=\left\{\begin{pmatrix}f_{1}\\f_{2}\end{pmatrix}\in(L^{p}[0,1])^{2}:f_{1},f_{2}\text{ abs.\ continuous and }\right. \\\notag \left.\frac{\partial f_{1}}{\partial \xi},\frac{\partial f_{2}}{\partial \xi}\in L^{p}[0,1],
                			 (Qf)_{1}(1)=K_{1}(Qf)_{2}(1),(Qf)_{2}(0)=K_{2}(Qf)_{1}(0)\begin{matrix} \ \\ \ \end{matrix}\right\},
			\end{align}
			where $Q=\frac{1}{\sqrt{2}}\left(\begin{smallmatrix}1&1\\-1&1\end{smallmatrix}\right)$. 
			In the framework of Theorem \ref{mainresult} the operators $A_{1}$ and $A_{2}$ read
			\begin{align*}
			D(A_{1})=\left\{\begin{pmatrix}f_{1}\\f_{2}\end{pmatrix}\in(L^{p}[0,1])^{2}:f_{1},f_{2}\text{ abs.\ continuous and } \frac{\partial f_{2}}{\partial \xi}\in L^{p}[0,1],\right.\\
                			 \left. (Qf)_{1}(1)=K_{1}(Qf)_{2}(1),(Qf)_{2}(0)=K_{2}(Qf)_{1}(0)\begin{matrix} \ \\ \ \end{matrix}\right\}, \quad A_{1}\begin{pmatrix}f_{1}\\f_{2}\end{pmatrix}=\frac{\partial}{\partial\xi}f_{2},
			 \end{align*}
			 \begin{equation*}
			D(A_{2})=\left\{f\in L^{p}[0,1]:f\text{ abs.\ cont.}, \frac{\partial f}{\partial \xi}\in L^{p}[0,1]\right\}, \quad A_{2}f=\frac{\partial}{\partial\xi}f.
			\end{equation*}
			By diagonalizing, $\mathcal{D}=QA_{ext}Q^{-1}$, it is easy to show that $A_{ext}$ generates a contraction $C_0$-semigroup (in the Euclidean norm). 
%			In fact, diagonalizing gives the decoupled system described by
%			\begin{align*}
%				\mathcal{D}={}&QA_{ext}Q^{-1}=\begin{pmatrix}\frac{\partial}{\partial \xi}&0\\0&-\frac{\partial}{\partial \xi}\end{pmatrix},\\
%			D(\mathcal{D})={}&\left\{\begin{pmatrix}f_{1}\\f_{2}\end{pmatrix}\in(L^{p}[0,1])^{2}:f_{1},f_{2}\text{ abs.\ continuous and   }\right.\\
%                		  & \quad\left.\frac{\partial f_{1}}{\partial \xi},\frac{\partial f_{2}}{\partial \xi}\in L^{p}[0,1],
%                			 f_{2}(0)=f_{1}(1)=0\begin{matrix} \ \\ \ \end{matrix}\right\}.
%			\end{align*}
%			From basic semigroup theory it is known that the components of 
%			$\mathcal{D}$, the differential operators $\frac{\partial}{\partial\xi}, -\frac{\partial}{\partial\xi}$ on the corresponding domains, generate the left and the right shift semigroup respectively (see, e.g., \cite[II.2.b]{EngelNagel}).
%
%                		Therefore, $\mathcal{D}$ generates a contraction semigroup $T_{\mathcal{D}}$ on $L^{p}[0,1]\times L^{p}[0,1]$ (in the $2$-norm). 
%			 Since $Q$ is a unitary transformation, also $A_{ext}$ generates a contraction semigroup, $Q^{-1}T_{\mathcal{D}}(t)Q$.
%
			Furthermore, let $\xi\mapsto\lambda(\xi)$ be positive and continuously differentiable on $[0,1]$ 
			and denote by $S$ the induced multiplication operator. Then, 
			\begin{align}\label{defASheat}
			    A_{S}f={}&A_{1}\begin{pmatrix}f\\SA_{2}f\end{pmatrix}=\big(0\quad \frac{\partial}{\partial \xi}\big)\begin{pmatrix}f\\\lambda(\xi)\frac{\partial f}{\partial \xi}\end{pmatrix}=\frac{\partial}{\partial \xi}\left(\lambda(\xi)\frac{\partial f}{\partial \xi}\right),\\
			    			 D(A_{S})={}&\left\{f\in L^{p}[0,1]:(f,SA_{2}f)^{T}\in D(A_{ext})\right\}\notag
                		\end{align}
			By the assumptions on $\lambda(\xi)$, it follows easily that
			 \begin{align*}
			   D(A_{S})=\left\{f\in L^{p}[0,1]:f,\frac{\partial f}{\partial \xi}\text{ abs.\ continuous and }\frac{\partial f}{\partial \xi},\frac{\partial^{2} f}{\partial \xi^{2}}\in L^{p}[0,1],\right. \\
                			 \quad \left.(K_{1}+1)f(1)=(K_{1}-1)\lambda(1)\frac{\partial f}{\partial \xi}(1),(K_{2}+1)f(0)=(1-K_{2})\lambda(0)\frac{\partial f}{\partial \xi}(0) \right\}
		         \end{align*}
			which is dense in $L^{p}[0,1]$.
			The operator $A_S$ corresponds to the heat equation
			 \begin{equation}\label{heat}
				 \frac{\partial u}{\partial t}(\xi,t)=\frac{\partial}{\partial \xi}\left(\lambda(\xi)\frac{\partial u}{\partial \xi}(\xi,t)\right),
                          \end{equation}
  			with the Robin boundary conditions
			 \begin{equation}\label{heatBC}\left\{\begin{array}{l}
			 (K_{2}+1)u(0,t)=(1-K_{2})\lambda(0)\frac{\partial u}{\partial\xi}(0,t),\\
			  (K_{1}+1)u(1,t)=(K_{1}-1)\lambda(1)\frac{\partial u}{\partial\xi}(1,t),
			\end{array}\right. \forall t\geq0.
			 \end{equation}
			 Hence, $\lambda(\xi)$ can represent the heat conduction coefficient.
			 It remains to show that the assumptions on $S$ are fulfilled. 
			 Clearly, $S$ is a bounded operator which is boundedly invertible since there exists $\lambda_{min},\lambda_{max}$ such that $0<\lambda_{min}<\lambda(\xi)<\lambda_{max}$ for $\xi\in[0,1]$. To show (\ref{assSinvers}) we use the SIP from Example \ref{SIPLp},
			 \begin{equation}
			 [f,Sf]=\int_{0}^{1}\lambda(s)^{p-1}|f(s)|^{p}\|Sf\|_{L^{p}}^{2-p} \ ds\geq \frac{1}{\lambda_{max}}\|Sf\|_{L^{p}}^{2}\geq  \frac{\lambda_{min}^{2}}{\lambda_{max}}\|f\|_{L^{p}}^{2}.
			 \end{equation}
			 Thus, by Theorem \ref{mainresult}, we conclude that $A_{S}$ generates a contraction semigroup.
			%We remark that the same construction works for general boundary conditions 
			%satisfying that $A_{ext}$ generates a contraction $C_0$-semigroup. 
			%This also yields general boundary conditions in (\ref{heatBC}). 

\subsection{Further results}
Motivated by the example in Subsection \ref{WaveandHeat}, one might ask when $A_{S}$ is even generating an analytic semigroup. Without further assumptions on the operator $A_{ext}$ this does not seem to work in general. 
However, the following theorem gives an answer.

\begin{theorem}\label{thm:group}
Assume that $A_{ext}$ from Theorem \ref{mainresult} has the form
\begin{align}\label{Aextspecial}
&A_{ext}=\left(\begin{array}{cc}0& A_{12}\\A_{21} & 0\end{array}\right),\\ \notag
&D(A_{ext})=\left\{(x_{1},x_{2})\in X_{1}\times X_{2}:x_{1}\in D(A_{21}),x_{2}\in D(A_{12})\right\}
\end{align}
and that $\mathcal{A}:=\left(\begin{smallmatrix}\ide&0\\0&S\end{smallmatrix}\right)A_{ext}$ generates a $C_{0}$-group, where $S\in\Bo(X_{2})$. Then, 
\begin{equation*}
A_{S}=A_{12}SA_{21},
\end{equation*}
 with $D(A_{S})=\left\{x\in X_{1}:x\in D(A_{21}),SA_{21}x\in D(A_{12})\right\}$ generates an analytic semigroup of angle $\frac{\pi}{2}$. 
\end{theorem}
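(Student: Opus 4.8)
The plan is to reduce everything to the classical fact that the square of the generator of a $C_{0}$-group generates an analytic semigroup of angle $\pi/2$. Recall that if $\mathcal{A}$ generates a $C_{0}$-group $(T(s))_{s\in\R}$, then $\mathcal{A}^{2}$ generates an analytic semigroup of angle $\pi/2$, given for $t>0$ by the Weierstrass-type formula $e^{t\mathcal{A}^{2}}=\frac{1}{\sqrt{4\pi t}}\int_{\R}e^{-s^{2}/(4t)}T(s)\,ds$, which extends analytically to the open right half-plane $\{z\in\C:\Re z>0\}$; see e.g.\ \cite{EngelNagel}. So the whole argument is to identify $A_{S}$ as (a corner of) the square of the group generator $\mathcal{A}$.

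\textbf{Step 1: block computation.} Since $S\in\Bo(X_{2})$, the operator $\mathcal{A}=\left(\begin{smallmatrix}\ide&0\\0&S\end{smallmatrix}\right)A_{ext}$ has domain $D(\mathcal{A})=D(A_{ext})=\{(x_{1},x_{2}):x_{1}\in D(A_{21}),\,x_{2}\in D(A_{12})\}$, and
\[
\mathcal{A}\begin{pmatrix}x_{1}\\x_{2}\end{pmatrix}=\begin{pmatrix}A_{12}x_{2}\\SA_{21}x_{1}\end{pmatrix},\qquad
\mathcal{A}^{2}=\begin{pmatrix}A_{12}SA_{21}&0\\0&SA_{21}A_{12}\end{pmatrix}=\begin{pmatrix}A_{S}&0\\0&SA_{21}A_{12}\end{pmatrix}.
\]
Here I would carefully check the domain: $(x_{1},x_{2})\in D(\mathcal{A}^{2})$ means $(x_{1},x_{2})\in D(A_{ext})$ and $\mathcal{A}(x_{1},x_{2})=(A_{12}x_{2},SA_{21}x_{1})\in D(A_{ext})$, i.e.\ $x_{1}\in D(A_{21})$, $SA_{21}x_{1}\in D(A_{12})$, $x_{2}\in D(A_{12})$, $A_{12}x_{2}\in D(A_{21})$. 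Using that $S$ is bounded so $D(SA_{21})=D(A_{21})$, this says precisely $D(\mathcal{A}^{2})=D(A_{S})\times D(SA_{21}A_{12})$, with $D(A_{S})$ as defined in the statement. Thus $\mathcal{A}^{2}$ is block-diagonal with $A_{S}$ as its $(1,1)$-corner.

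\textbf{Step 2: transfer to the corner.} By the group-square theorem, $\mathcal{A}^{2}$ generates an analytic semigroup of angle $\pi/2$ on $X_{1}\times X_{2}$. Its resolvent is block-diagonal, $(\mu\ide-\mathcal{A}^{2})^{-1}=\mathrm{diag}\big((\mu\ide-A_{S})^{-1},(\mu\ide-SA_{21}A_{12})^{-1}\big)$ for $\mu$ in the sector of half-opening $\pi$ around $(0,\infty)$, so the sectorial resolvent bound valid for $\mathcal{A}^{2}$ restricts verbatim to the $(1,1)$-block; hence $A_{S}$ is sectorial with sector of analyticity of half-opening $\pi/2$. Equivalently, the subspace $X_{1}\times\{0\}$ is complemented and invariant under $e^{t\mathcal{A}^{2}}$, and the restriction is exactly the (analytic) semigroup generated by $A_{S}$. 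Finally $D(\mathcal{A}^{2})$ is dense in $X_{1}\times X_{2}$ (domain of the square of a semigroup generator), and since the closure of a product is the product of the closures, $\overline{D(A_{S})}\times\overline{D(SA_{21}A_{12})}=X_{1}\times X_{2}$, so $D(A_{S})$ is dense in $X_{1}$. Therefore $A_{S}$ generates an analytic $C_{0}$-semigroup of angle $\pi/2$, as claimed.

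The only genuinely delicate point is the domain bookkeeping in Step 1: one must verify that the $(1,1)$-corner of $\mathcal{A}^{2}$ is exactly $A_{S}$ on exactly the domain $D(A_{S})$ stated in the theorem, neither a proper restriction nor an extension, which is why the boundedness of $S$ (giving $D(SA_{21})=D(A_{21})$) is used. Everything after that is a routine application of the group-square theorem together with the passage of sectoriality and density through a reducing, complemented block decomposition.
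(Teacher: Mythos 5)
Your proof is correct and follows essentially the same route as the paper: invoke the fact that the square of a $C_{0}$-group generator generates an analytic semigroup of angle $\frac{\pi}{2}$, compute $\mathcal{A}^{2}$ as a block-diagonal operator with $A_{S}$ in the $(1,1)$-corner, and read off the result. Your domain bookkeeping and corner-restriction argument are in fact more detailed than the paper's one-line version (which, incidentally, has typos: the lower-right block should be $SA_{21}A_{12}$ and the condition in $D(\mathcal{A}^{2})$ should read $A_{12}x_{2}\in D(A_{21})$, exactly as you wrote).
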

\begin{proof}
It is a fact that if $\mathcal{A}$ generates a $C_{0}$-group, it follows that $\mathcal{A}^{2}$ generates an analytic $C_{0}$-semigroup of angle $\frac{\pi}{2}$, see \cite[Corollary II.4.9]{EngelNagel}. Therefore, the result follows by considering the upper left entry of 
\begin{equation*}
\mathcal{A}^{2}=\begin{pmatrix}0&A_{12}\\SA_{21}&0\end{pmatrix}\begin{pmatrix}0&A_{12}\\SA_{21}&0\end{pmatrix}=\begin{pmatrix}A_{12}SA_{21}&0\\0&A_{21}SA_{12}\end{pmatrix},
\end{equation*}
where 
\begin{equation*}
	D(\mathcal{A}^{2})=\left\{(x_{1},x_{2})\in D(A_{21})\times D(A_{12}):SA_{21}x_{1}\in D(A_{12}), SA_{12}x_{2}\in D(A_{21}\right\}.
	\end{equation*}
\end{proof}

\begin{remark}
Given that $A_{ext}$ generates a $C_{0}$-group, the assumption in Theorem \ref{thm:group}, that $\mathcal{A}$ generates a $C_{0}$-group, can be checked by means of (multiplicative) perturbation results for generators, see e.g. \cite{GustafsonLumer}.
\end{remark}
In the following we note that the group generation is not surprising in the view of the assumptions in Theorem \ref{mainresult}

\begin{proposition}[Lemma 5.1 in \cite{ZwartParahyp}]\label{propgroup}
Let $A_{ext}$, given in the form (\ref{Aextspecial}), generate a $C_{0}$-semigroup $T(t)$  with constants $M,\omega$ such that $\|T(t)\|\leq M e^{t\omega}$ for all $t>0$. Then, $A_{ext}$ can be extended to a $C_{0}$-group which satisfies $\|T(t)\|\leq Me^{|t|\omega}$. In particular, if $A_{ext}$ generates a contraction semigroup, then $A_{ext}$ generates a group of isometries.
\end{proposition}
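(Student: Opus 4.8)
The plan is to exploit the off-diagonal block structure of $A_{ext}$ through a symmetry argument. Set $N=\left(\begin{smallmatrix}\ide&0\\0&-\ide\end{smallmatrix}\right)\in\Bo(X_{1}\times X_{2})$. Then $N$ is an involution, $N^{-1}=N$, and it is an isometry for the Euclidean norm (\ref{twonorm}) since $\|N(x_{1},x_{2})^{T}\|^{2}=\|x_{1}\|^{2}+\|x_{2}\|^{2}$. Using the form (\ref{Aextspecial}), a direct computation yields $NA_{ext}N=-A_{ext}$; here it is essential that $D(A_{ext})=D(A_{21})\times D(A_{12})$ is a product set, so that $N$ maps $D(A_{ext})$ bijectively onto itself and the operator identity holds on the nose, domains included. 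This is exactly the point where the special shape in (\ref{Aextspecial}) enters.

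First I would pass this similarity to the level of semigroups: since $A_{ext}$ generates $T(t)$ and $N$ is boundedly invertible, the operator $NA_{ext}N^{-1}=-A_{ext}$ generates the similar $C_{0}$-semigroup $V(t):=NT(t)N$, and because $\|N\|=1$ we get $\|V(t)\|=\|NT(t)N\|\le\|T(t)\|\le Me^{t\omega}$ for all $t\ge0$.

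Next I would invoke the standard characterization (see e.g.\ \cite{EngelNagel}) that a closed operator $A$ generates a $C_{0}$-group precisely when both $A$ and $-A$ generate $C_{0}$-semigroups. We have just verified this for $A=A_{ext}$, hence $A_{ext}$ generates a $C_{0}$-group $(U(t))_{t\in\R}$, necessarily given by $U(t)=T(t)$ for $t\ge0$ and $U(t)=V(-t)=NT(-t)N$ for $t<0$. The asserted bound is then immediate: for $t\ge0$, $\|U(t)\|=\|T(t)\|\le Me^{t\omega}=Me^{|t|\omega}$, and for $t<0$, $\|U(t)\|=\|NT(-t)N\|\le\|T(-t)\|\le Me^{-t\omega}=Me^{|t|\omega}$.

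For the final statement, assume $T(t)$ is a contraction semigroup, i.e.\ $M=1$ and $\omega=0$; then $\|U(t)\|\le1$ for every $t\in\R$. From the group law $U(-t)U(t)=\ide$ we obtain, for any $x$ and any $t$, $\|x\|=\|U(-t)U(t)x\|\le\|U(t)x\|\le\|x\|$, so $\|U(t)x\|=\|x\|$ and each $U(t)$ is an isometry. I do not expect a genuine obstacle; the only delicate point is the bookkeeping of domains in the identity $NA_{ext}N=-A_{ext}$, which is precisely where the product-domain hypothesis of (\ref{Aextspecial}) is used.
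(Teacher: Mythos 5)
Your proof is correct. The paper itself does not prove this proposition (it cites Lemma~5.1 of \cite{ZwartParahyp}), but your argument --- conjugating by $N=\left(\begin{smallmatrix}\ide&0\\0&-\ide\end{smallmatrix}\right)$, which is an isometric involution for the Euclidean norm and satisfies $NA_{ext}N=-A_{ext}$ with matching product domains, so that $-A_{ext}$ also generates a semigroup with the same bound and the standard ``$A$ and $-A$ both generate'' criterion yields the group --- is exactly the standard route for such off-diagonal block operators, and your handling of the domain issue and of the isometry statement via $U(-t)U(t)=\ide$ is sound.
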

With the results of this subsection we are able to continue the discussion of the example of the wave and heat equation in Section \ref{WaveandHeat}. To conclude the analyticity of the semigroup generated by $A_{S}$, (\ref{defASheat}), it remains to check that $\mathcal{A}=\left(\begin{smallmatrix}\ide&0\\0&S\end{smallmatrix}\right)A_{ext}$ generates a $C_{0}$-group. By Proposition \ref{propgroup}, it even suffices to show that $\mathcal{A}$ generates a $C_{0}$-semigroup.
In fact, by diagonalizing and using the specific assumptions on $S$ (the multiplication operator induced by $\lambda$), this is not hard to deduce (see also \cite[Chapters 12 and 13]{jacobzwartbook}). 
%Since $A_{ext}$, (\ref{Aextwave}) generates a $C_{0}$-group of isometries (by Proposition \ref{propgroup}), one can apply Lemma 2B in \cite{GustafsonLumer} since $S$ $\pm\mathcal{A}$ dissipative

\subsection{Remarks and Outlook}
	One might question the use of SIPs instead of employing the more common dissipativity definition only relying on the norm. The reason is that the condition on $S$ and the proof happens to be natural in the view of the Hilbert space result.  \\
	%Motivated by the example, one might ask whether Theorem \ref{mainresult} yields that $A_{S}$ generates an analytic semigroup if  $A_{ext}$ does. However, without further assumptions on $S$, this proof does not seem to work. \\
	Discussing more general $S$ (and at the same time restricting the form of $A_{ext}$) as $S=i\ide$, like it is done in \cite[Section 4]{ZwartParahyp} for Hilbert spaces, might be possible as well as adaptions to nonlinear $S$.

\small

\end{document}